\newtheorem{thm}{Theorem}[section]
\newtheorem{exam}[thm]{Example}
\newtheorem{lem}[thm]{Lemma}
\newtheorem{prop}[thm]{Proposition}
\newtheorem{defn}[thm]{Definition}
\newtheorem{rem}[thm]{Remark}
\newcommand{\eval}[2][\right]{\relax
 \ifx#1\right\relax \left.\fi#2#1\rvert}
\begin{document}

\title{\bf Topological algebras of statistical $\tau$-bounded operators on ordered topological vector spaces} 
\maketitle

\author{\centering Abdullah Ayd\i n$^*$ and Muhammed \c{C}{\i}nar\\ \bigskip \small 
	Department of Mathematics, Mu\c{s} Alparslan University, Mu\c{s}, Turkey \\}

\bigskip

\abstract{In this paper, we introduce statistical bounded set on topological vector space. Also, we consider three classes of bounded operators from topological vector spaces to ordered topological vector spaces. Moreover, we give relations between them and order bounded operators. We give algebraic properties of these operators concerning to the uniform convergence topology.}

\bigskip
\let\thefootnote\relax\footnotetext
{Keywords: statistical bounded set, statistically bounded operator, $st$-$bo$-operator, $st$-$bb$-operator 
	
\text{2010 AMS Mathematics Subject Classification:} 47B65, 46A40, 46H35

$^*$Corresponding author: a.aydin@alparslan.edu.tr}

\section{Introductory Facts}
Bounded operators and the statistical convergence are natural and efficient tools in the theory of functional analysis. A vector lattice that was introduced by F. Riesz \cite{Riez} is an ordered vector space that has many applications in measure theory, operator theory, and applications in economics; see for example \cite{AB,ABPO,AAydn2,Riez,Za}. On the other hand, the statistical convergence is a generalization of the ordinary convergence of a real sequence; see for example \cite{AAydn3,AP,MK}. Studies related to this paper are done by Troitsky \cite{Tr} where the $bb$- and $nb$-bounded operators were defined between topological vector spaces and by Ayd\i n \cite{Aybdd} in which the $ob$-bounded operator was defined from vector lattices to locally solid vector lattice and by Hejazian et al. \cite{HMZ}, where $nb$- and $bb$-bounded operators were defined on topological vector spaces. Also, Albayrak and Pehlivan introduced the statistical $\tau$ bounded on locally solid vector lattice in \cite{AP} and Ayd\i n defined $st$-$u_\tau$-closed set in \cite{AAydn3}. Our aim in this paper is to introduce and study some classes of bounded operators from topological vector spaces to topological ordered vector spaces.

Now, let give some basic notations and terminologies that will be used in this paper. A neighborhood of an element $x$ in a topological vector space $E$ is a subset of $E$ containing an open set that contains $x$. Neighborhoods of zero will often be referred to as zero neighborhoods. Every linear topology $\tau$ on a vector space $E$ has a base $\mathcal{N}$ of zero neighborhoods satisfying the following four properties; for each $V\in \mathcal{N}$, we have $\lambda V\subseteq V$ for all scalar $\lvert \lambda\rvert\leq 1$; for any $V_1,V_2\in \mathcal{N}$ there is another $V\in \mathcal{N}$ such that $V\subseteq V_1\cap V_2$; for each $V\in \mathcal{N}$ there exists another $U\in \mathcal{N}$ with $U+U\subseteq V$; for any scalar $\lambda$ and each $V\in \mathcal{N}$, the set $\lambda V$ is also in $\mathcal{N}$; for much more detail see \cite{AB,ABPO,Tr}. In this article, unless otherwise, when we mention a zero neighborhood, it means that it always belongs to a base that holds the above properties. Let $E$ be a real-valued vector space. If there is an order relation "$\leq$" on $E$, i.e., it is antisymmetric, reflexive and transitive then $E$ is called ordered vector space whenever the following conditions hold: for every $x,y\in E$ such that $x\leq y$, we have $x+z\leq y+z$ and $\alpha x\leq\alpha y$ for all $z\in E$ and $\alpha \in \mathbb{R}$. An ordered vector space $E$ is called Riesz space or vector lattice if, for any two vectors $x,y\in E$, the infimum $x\wedge y=\inf\{x,y\}$ and the supremum $x\vee y=\sup\{x,y\}$ exist in $E$. Let $E$ be a vector lattice. Then, for any $x\in E$, the positive part of $x$ is $x^+:=x\vee0$, the negative part of $x$ is $x^-:=(-x)\vee0$ and absolute value of $x$ is $\lvert x\rvert:=x\vee (-x)$. A vector lattice is called order complete if every nonempty bounded above subset has a supremum (or, equivalently, whenever every nonempty bounded below subset has an infimum). A vector lattice is order complete iff $0\leq x_n\uparrow\leq x$ implies the existence of $\sup{x_n}$. For a positive element $a$ in a vector lattice $E$, the set $[-a,a]=\{x\in E:-a\leq x\leq a \}$ is an order interval. Also, if any subset in $E$ is included in an order interval then it is called order bounded set. An order bounded operator between vector lattices $E$ and $F$ sends order bounded subsets to order bounded subsets, abbreviated as $L_b(E;F)$; for more detail information on these notions see \cite{AB,ABPO,AAydn2,Aybdd,Riez,Tr,Za}. 

Recall that a subset $A$ of a vector lattice $E$ is called solid if, for each $x\in A$ and $y\in E$,  $|y|\leq|x|$ implies $y\in A$. A solid vector subspace of a vector lattice is referred to as an ideal. An order closed ideal is called a band. Let $E$ be vector lattice and $\tau$ be a linear topology on it. Then $(E,\tau)$ is called a {\em locally solid vector lattice} (or, {\em locally solid Riesz space}) if $\tau$ has a base which consists of solid sets, for more details on these notions see \cite{AB,ABPO,Za}. A vector lattice $E$ is called \textit{Archimedean} whenever $\frac{1}{n}x\downarrow 0$ holds in $E$ for each $x\in E_+$. In this article, unless otherwise, all vector lattices are assumed to be real and Archimedean.

Consider a subset $K$ of the set $\mathbb{N}$ of all natural numbers. Let's define a new set $K_n=\{k\in K:k\leq n\}$. Then we denote $\lvert K_n\rvert$ for the cardinality of the set $K_n$. If the limit of $\mu(K)=\lim\limits_{n\to\infty}\lvert K_n\rvert/n$ exists then $\mu(K)$ is called the asymptotic density of the set $K$. Let $X$ be a topological space and $(x_n)$ be a sequence in $X$. Then $(x_n)$ is said to be statistically convergent to $x\in X$ whenever, for each neighborhood $U$ of $x$, we have $\mu\big(\{n\in\mathbb{N}:x_n\notin U\}\big)=0$; see for example \cite{AP,MK}. Similarly, a sequence $(x_n)$ in a locally solid Riesz space $(E,\tau)$ is said to be statistically $\tau$-convergent to $x\in E$ if it is provided that, for every $\tau$-neighborhood $U$ of zero, $\lim\limits_{n\to\infty}\frac{1}{n}\big\lvert\{k\leq n:(x_k-x)\notin U\}\big\rvert=0$ holds. Let $(x_n)$ be a sequence in a locally solid Riesz space $(E,\tau)$. If there exists some scalar $\lambda>0$ such that $\mu\big(\{n\in\mathbb{N}:\lambda x_n\notin U\}\big)=0$ holds for every  $\tau$-neighborhood $U$ of zero then we say that $(x_n)$ is  statistically $\tau$-bounded; see \cite{AP}. 


\section{Boundedness}
In this section, we give the notion statistical bounded set and introduce $st$-$bo$-, $st$-$bb$- and $st$-$bs$-bounded operators. The following notion could be known, but since we do not have a reference for which we give that without reference.
\begin{defn}
Let $(X,\tau)$ be a topological vector space. A subset $B\subseteq X$ is called statistical bounded (or shortly, $st$-bounded) set in $X$ if, for every zero neighborhood $U$, there is a scalar $\lambda>0$ such that
$$
\lim\limits_{n\to\infty}\frac{1}{n}\big\lvert\{b\in B:\lambda b\notin U\}\big\rvert=0.
$$
\end{defn}

It is clear that subset of an $st$-bounded set is $st$-bounded. On the other hand, a subset $B$ in a topological vector space $(E,\tau)$ is called topological bounded (or shortly, $\tau$-bounded) if, for every zero neighborhood $U$ in $E$, there is a positive scalar $\lambda$ such that $B\subseteq \lambda U$. So, we have the following useful fact.
\begin{lem}\label{top bounded is st bounded}
Every $\tau$-bounded set in topological vector spaces is $st$-bounded.
\end{lem}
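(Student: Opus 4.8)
The plan is to notice that $\tau$-boundedness is in fact \emph{stronger} than what the definition of $st$-boundedness demands: it forces the relevant counting set to be empty, so its asymptotic density is automatically zero. Concretely, I would start by fixing a $\tau$-bounded set $B$ in a topological vector space $(X,\tau)$ and an arbitrary zero neighborhood $U$. By the definition of $\tau$-boundedness there is a positive scalar $\lambda$ with $B\subseteq\lambda U$.

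Next I would pass to the reciprocal scalar $\gamma:=\lambda^{-1}>0$. Since multiplication by $\gamma$ is linear and hence inclusion-preserving, the containment $B\subseteq\lambda U$ yields $\gamma B\subseteq U$, that is, $\gamma b\in U$ for every $b\in B$. Consequently $\{b\in B:\gamma b\notin U\}=\emptyset$, so $\frac1n\big\lvert\{b\in B:\gamma b\notin U\}\big\rvert=0$ for every $n\in\mathbb{N}$, and therefore $\lim_{n\to\infty}\frac1n\big\lvert\{b\in B:\gamma b\notin U\}\big\rvert=0$ as well.

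Finally, since $U$ was an arbitrary zero neighborhood and the witnessing scalar $\gamma$ was produced directly from the scalar $\lambda$ attached to that same $U$, the defining condition for an $st$-bounded set is satisfied, which completes the argument. I do not expect any genuine obstacle here: the only point to keep straight is to match the quantifier on $U$ — the scalar that works for $st$-boundedness at $U$ is obtained from the one that works for $\tau$-boundedness at $U$ — and no property of the neighborhood base beyond compatibility of scalar multiplication with inclusions is used. In fact the conclusion obtained is stronger than required, because the exceptional set turns out to be empty rather than merely of density zero.
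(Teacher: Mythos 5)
Your proof is correct and is exactly the direct argument the paper treats as immediate (the lemma is stated without proof as a ``useful fact''): from $B\subseteq\lambda U$ you get $\lambda^{-1}B\subseteq U$, so the exceptional set $\{b\in B:\lambda^{-1}b\notin U\}$ is empty and the density condition holds trivially. Nothing further is needed.
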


Let $X$ and $Y$ be topological vector spaces. An operator $T:X\to Y$ is said to be $bb$-bounded if it maps every bounded set into a bounded set; see \cite{Tr}. Motivated by this definition and by the $ob$-bounded operator in \cite{Aybdd} and the $nb$- and $bb$-bounded operators in \cite{HMZ}, we can give the following notions.

\begin{defn}
Let $E$ be a topological vector space, $F$ be an ordered topological vector space and $T:E\to F$ be an operator. Then $T$ is said to be 
\begin{enumerate}
\item[(1)] $st$-$bo$-bounded if it maps every statistically bounded set into order bounded set,
\item[(2)] $st$-$bb$-bounded if it maps each statistically bounded set into a topological bounded set,
\item[(3)] $st$-$bs$-bounded if it maps statistically bounded sets into statistically bounded sets.
\end{enumerate}
\end{defn}

Consider the following theorems which are two classical results about order bounded subsets in a locally solid vector lattice; see \cite[Theorem 2.19.(i)]{AB} and \cite[Theorem 2.2.]{L}, respectively.
\begin{thm}\label{order bdd is top. bdd }
Let $(E,\tau)$ be a locally solid vector lattice. Then every order bounded subset in $E$ is $\tau$-bounded.
\end{thm}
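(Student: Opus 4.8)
\emph{Proof proposal.} The plan is to reduce the problem to a single order interval and then combine the local solidity of the topology with the continuity of scalar multiplication. First I would fix an order bounded set $A\subseteq E$; by definition there is some $a\in E_+$ with $A\subseteq[-a,a]$, so it is enough to prove that $[-a,a]$ itself is $\tau$-bounded, i.e.\ that for every zero neighborhood $U$ there is a scalar $\lambda>0$ with $[-a,a]\subseteq\lambda U$.

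Next, given a zero neighborhood $U$, I would invoke local solidity to choose a solid zero neighborhood $V$ with $V\subseteq U$. Since $(E,\tau)$ is a topological vector space, the scalar multiplication map $\mathbb{R}\times E\to E$ is continuous, and hence $\tfrac1n a\to 0$ in $\tau$; consequently there is some $n\in\mathbb{N}$ with $\tfrac1n a\in V$. Now for an arbitrary $x\in[-a,a]$ we have $-a\le x\le a$, so $\lvert x\rvert\le a$ and therefore $\big\lvert\tfrac1n x\big\rvert=\tfrac1n\lvert x\rvert\le\tfrac1n a$. Because $\tfrac1n a\in V$ and $V$ is solid, this forces $\tfrac1n x\in V\subseteq U$, that is, $x\in nU$. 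Thus $[-a,a]\subseteq nU$, and a fortiori $A\subseteq nU$, so $\lambda=n$ witnesses that $A$ is $\tau$-bounded.

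The argument is quite short, and I do not anticipate a real obstacle. The only point that genuinely requires care is the reduction to a \emph{solid} zero neighborhood, which is precisely the step where local solidity is used and without which the statement is false; the auxiliary fact that $\tfrac1n a\to 0$ should be attributed to the continuity of scalar multiplication in a topological vector space rather than to the Archimedean hypothesis on $E$.
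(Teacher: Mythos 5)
Your proof is correct and is essentially the standard argument for this classical fact (the paper itself only cites \cite[Theorem 2.19(i)]{AB} without reproducing a proof): reduce to an order interval $[-a,a]$, pass to a solid zero neighborhood $V\subseteq U$, use that $V$ is absorbing to get $\frac{1}{n}a\in V$, and conclude by solidity. Your closing remark is also right: the absorption step comes from continuity of scalar multiplication in a topological vector space, not from the Archimedean property.
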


\begin{thm}\label{top. bdd is order bdd}
Let $(E,\tau)$ be an ordered topological vector space that has order bounded $\tau$-neighborhood of zero. Then every $\tau$-bounded subset is order bounded.
\end{thm}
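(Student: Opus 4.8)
The plan is to extract the order interval hiding inside the given neighborhood and then dilate it. First I would fix an order bounded $\tau$-neighborhood $V$ of zero; shrinking $V$ if necessary we may assume it belongs to the fixed base $\mathcal{N}$ of zero neighborhoods, since every subset of an order bounded set is again order bounded. By the definition of order boundedness there are vectors $u\leq w$ in $E$ with $V\subseteq[u,w]$, where $[u,w]=\{z\in E:u\leq z\leq w\}$.

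Next I would take an arbitrary $\tau$-bounded subset $B\subseteq E$ and apply the definition of $\tau$-boundedness to the particular zero neighborhood $U:=V$: this yields a scalar $\lambda>0$ with $B\subseteq\lambda V$. Combining this with the previous step gives $B\subseteq\lambda V\subseteq\lambda[u,w]$, so it only remains to recognize $\lambda[u,w]$ as an order interval.

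The single place where the ordered-vector-space structure enters --- and the only step that is not purely set-theoretic --- is the identity $\lambda[u,w]=[\lambda u,\lambda w]$ valid for $\lambda>0$: if $u\leq z\leq w$ then multiplying through by the positive scalar $\lambda$ and using the compatibility of $\leq$ with scalar multiplication gives $\lambda u\leq\lambda z\leq\lambda w$, and the reverse inclusion follows by applying the same reasoning with $\lambda^{-1}$. Hence $B\subseteq[\lambda u,\lambda w]$, that is, $B$ is order bounded, as required. I do not expect any genuine obstacle here: the whole content of the statement sits in the hypothesis that an order bounded zero neighborhood exists, and once that neighborhood is named the proof is a one-line dilation argument; the only thing to be slightly careful about is that the scalar $\lambda$ be positive, so that it preserves the order relation.
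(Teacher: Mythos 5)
Your argument is correct: taking an order bounded zero neighborhood $V\subseteq[u,w]$, absorbing $B$ into $\lambda V$, and using $\lambda[u,w]=[\lambda u,\lambda w]$ for $\lambda>0$ is exactly the standard one-line dilation proof, and your care about the positivity of $\lambda$ is the right (and only) delicate point. Note that the paper itself offers no proof of this theorem --- it is quoted verbatim from the cited reference of Hong --- so there is nothing to diverge from; your proof is the expected one.
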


We continue with the following several basic results that follow directly from their basic definitions and Lemma \ref{top bounded is st bounded}, Theorem \ref{order bdd is top. bdd } and Theorem \ref{top. bdd is order bdd}, so its proof is omitted.
\begin{rem} 
Let $E$ be a topological vector space and $F$ be an ordered topological vector space. Then we have that
\begin{enumerate}
\item[(i)] every ordered bounded set in a locally solid vector lattice is $st$-bounded,

\item[(ii)] the $st$-$bb$-boundedness implies $st$-$bs$-boundedness,

\item[(iii)] the $st$-$bo$-boundedness implies $st$-$bb$-boundedness if $F$ is a locally solid vector lattice,

\item[(iv)] the $st$-$bo$-boundedness implies $st$-$bs$-boundedness if $F$ is a locally solid vector lattice,

\item[(v)] the $st$-$bb$-boundedness implies $st$-$bo$-boundedness whenever $F$ has order bounded $\tau$-neighborhood of zero,

\item[(vi)] every ordered bounded operator is $st$-$bo$-bounded if $E$ is a locally solid vector lattice,

\item[(vii)] ordered boundedness implies $st$-$bo$-boundedness implies $st$-$bb$-boundedness implies $st$-$bs$-boundedness  if $E=F$ is a locally solid vector lattice, so we have $L_b(E)\subseteq B_{st-bo}(E)\subseteq B_{st-bb}(E)\subseteq B_{st-bs}(E)$,

\item[(vii)] if $E=F$ is a locally solid vector lattice with an order bounded $\tau$-neighborhood of zero then the $st$-$bo$-boundedness and the $st$-$bb$ are coinciding, so we have $L_b(E)\subseteq B_{st-bo}(E)=B_{st-bb}(E)\subseteq B_{st-bs}(E)$.
\end{enumerate}
\end{rem} 

We denote $B_{st-bo}(E,F)$ the class of all $st$-$bo$-bounded operators from a topological vector space $E$ to an ordered topological vector space $F$. Also, we denote $B_{st-bb}(E,F)$ and $B_{st-bs}(E,F)$ the class of all $st$-$bb$-bounded and the class of all $st$-$bb$-bounded operators from $E$ to $F$, respectively. Now, we can allocate the sets $B_{st-bo}(E,F)$, $B_{st-bb}(E,F)$ and $B_{st-bs}(E,F)$ to the topology of uniform convergence on $st$-bounded sets. That is, a net $(T_\alpha)$ in $B_{st-bo}(E,F)$ (respectively, in $B_{st-bb}(E,F)$ or $B_{st-bs}(E,F)$) converges to zero on an $st$-bounded set $B$ with this topology if, for each $u\in F_+$(respectively, for every zero neighborhood $U$ or for each $st$-bounded zero neighborhood $V$ of $F$), there is an index $\alpha_0$ such that $T(B)\subseteq [-u,u]$ (respectively, $T(B)\subseteq U$ or $T(B)\subseteq V$) for all $\alpha\geq \alpha_0$. We should note that these classes of operators are not equal, in general. To see that let's consider the following example \cite[Example 2.3.]{L}.
\begin{exam}\
Let $E=F$ be the ordered topological vector space $\mathbb{R}^2$ with the lexicographic ordering and the usual topology. Then the identity operator $I$ on $E$ is $st$-$bo$-bounded, but it fails to be $st$-$bb$-bounded. Indeed, consider two elements $x=(-1,0)$ and $y=(1,0)$ in $E$. Then the order interval $[x,y]$ is clearly an order bounded in $E$. It can be seen that this interval includes uncountably many infinite vertical rays. Hence, the order interval $[x,y]$ is not topological bounded set in $E$ because $E$ is the usual topology and a topological bounded set cannot contain any infinite vertical ray. Thus, $st$-$bo$-boundedness does not imply $st$-$bb$-boundedness in general.
\end{exam}

Also, we can consider the following example.
\begin{exam}
Let $E=F$ be $c_0$, the space of all null sequences with the usual order and norm topology. Then the identity operator $I$ on $E$ is $st$-$bb$-bounded, but it fails to be $st$-$bo$-bounded. Indeed, consider the unit ball $B(0,1)$ centered at zero with radius one of $c_0$. Let's take the sequence $(x_n)$ defined by $x_n=(1,1,1,\dots,1,0,0,0\dots)$. Then it can be seen that $(x_n)$ is $st$-bounded because topological bounded. But it is not order bounded in $c_0$. Thus, $st$-$bb$-boundedness does not imply $st$-$bo$-boundedness in general.
\end{exam}

\section{Main Results}
We give some results about $st$-$bo$-, $st$-$bb$- and $st$-$bs$-bounded operators. In ordered vector spaces, the summation of finite order bounded sets is order bounded, and the summation of finite topological bounded sets is bounded in topological vector spaces. Thus, the summation of $st$-$bo$-bounded operators is also $st$-$bo$-bounded and the summation of $st$-$bb$-bounded operators is also $st$-$bb$-bounded. For the $st$-$bs$-bounded operators, we have the following proposition.

\begin{prop}
The summation of $st$-$bs$-bounded operators $S$ and $T$ from a topological vector space $E$ to a locally solid vector space $F$ is $st$-$bs$-bounded.
\end{prop}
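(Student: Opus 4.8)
The plan is to argue directly from the definitions. Let $B\subseteq E$ be an arbitrary $st$-bounded set; I must show that $(S+T)(B)$ is $st$-bounded in $F$. Fix a zero neighborhood $U$ in $F$. Since $F$ is locally solid I may take $U$ from a base of solid, balanced zero neighborhoods, and I would then choose a zero neighborhood $V$ from that base with $V+V\subseteq U$. Splitting $U$ as $V+V$ \emph{before} invoking the hypotheses on $S$ and $T$ is the small trick that lets a single scalar do the job for both operators.

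Next I would use the two hypotheses separately. Since $S$ is $st$-$bs$-bounded, $S(B)$ is $st$-bounded, so there is $\lambda_1>0$ with $\lim\limits_{n\to\infty}\frac{1}{n}\big\lvert\{b\in B:\lambda_1 Sb\notin V\}\big\rvert=0$; similarly there is $\lambda_2>0$ with $\lim\limits_{n\to\infty}\frac{1}{n}\big\lvert\{b\in B:\lambda_2 Tb\notin V\}\big\rvert=0$. Put $\lambda:=\min\{\lambda_1,\lambda_2\}$ and write $A_1:=\{b\in B:\lambda_1 Sb\notin V\}$ and $A_2:=\{b\in B:\lambda_2 Tb\notin V\}$. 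Because $V$ is balanced and $0<\lambda/\lambda_i\leq 1$, from $\lambda_i Sb\in V$ it follows that $\lambda Sb=(\lambda/\lambda_1)(\lambda_1Sb)\in V$ for $b\notin A_1$, and likewise $\lambda Tb\in V$ for $b\notin A_2$. Hence for every $b\in B\setminus(A_1\cup A_2)$ one gets $\lambda(S+T)b=\lambda Sb+\lambda Tb\in V+V\subseteq U$, so that $\{b\in B:\lambda(S+T)b\notin U\}\subseteq A_1\cup A_2$. By subadditivity of the asymptotic density, $\lim\limits_{n\to\infty}\frac{1}{n}\big\lvert A_1\cup A_2\big\rvert=0$, and therefore $\lim\limits_{n\to\infty}\frac{1}{n}\big\lvert\{b\in B:\lambda(S+T)b\notin U\}\big\rvert=0$. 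Since $U$ was arbitrary, $(S+T)(B)$ is $st$-bounded.

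The manipulations with the balanced neighborhood $V$ and the density are routine; the only point needing genuine care — and the one I expect a referee to scrutinize — is that a \emph{single} scalar $\lambda$ must control $S$ and $T$ simultaneously, which is handled by taking $\lambda=\min\{\lambda_1,\lambda_2\}$ together with the preliminary choice of $V$ with $V+V\subseteq U$. I would also remark that solidity of $F$ is not actually used in this argument: a balanced base of zero neighborhoods is all that is needed, and the same scheme extends verbatim to any finite sum of $st$-$bs$-bounded operators.
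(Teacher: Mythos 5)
Your argument is correct and rests on the same two devices as the paper's own proof: fixing $V$ with $V+V\subseteq U$ \emph{before} invoking the hypotheses, and taking $\lambda=\min\{\lambda_1,\lambda_2\}$ so that a single scalar serves both operators. The organization differs slightly: the paper reduces the proposition to the claim that the sum of two $st$-bounded sets in $F$ is $st$-bounded (applied to $S(B)+T(B)\supseteq (S+T)(B)$) and then asserts $\mu\big(\{x+y:\lambda(x+y)\notin U\}\big)=0$ in one line, whereas you keep all of the counting inside the single index set $B$ and finish with the inclusion $\{b\in B:\lambda(S+T)b\notin U\}\subseteq A_1\cup A_2$ together with subadditivity of density; your bookkeeping is the tighter of the two, since the paper's final density claim about the sum set is exactly the step it leaves unjustified. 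Your closing remark is also correct and is a genuine improvement: the paper uses solidity of $W$ (via $\lvert\lambda x\rvert\leq\lvert\lambda_{w_1}x\rvert$), but balancedness of the base, available in every topological vector space, already gives $\lambda Sb=(\lambda/\lambda_1)(\lambda_1 Sb)\in V$, so the hypothesis that $F$ be locally solid can be dropped. The one point you should make explicit is the passage from ``$S(B)$ is $st$-bounded'' to the displayed limit over $b\in B$: the definition counts elements of the image $S(B)$, so your count and the definitional one agree only up to the fibres of $S$ over $B$; this parametrized reading is, however, the same one the paper itself uses throughout, and it is the reading under which your union bound at the end is meaningful.
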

 
\begin{proof}
It is enough to show that the summation of finite statistical bounded sets in locally solid vector lattice is $st$-bounded. Consider two $st$-bounded sets $B_1$ and $B_2$ in a locally solid vector lattice  $F$. For any zero neighborhood $U$, there exists another zero neighborhood $W$ such that $W+W\subseteq U$. Hence, we have some positive scalars $\lambda_{w_1},\lambda_{w_2}>0$ such that $\mu(K_{w_1})=\mu(K_{w_2})=0$ for sets $K_{w_1}=\{x\in B_1:\lambda_{w_1} x\notin W\}$ and $K_{w_2}=\{y\in B_2:\lambda_{w_2} y\notin W\}$. Let choose a positive scalar $\lambda=\min\{\lambda_{w_1},\lambda_{w_2},1\}$. Then we have $\lvert\lambda x\rvert\leq\lvert\lambda_{w_1} x\rvert$ and $\lvert\lambda y\rvert\leq\lvert\lambda_{w_2} y\rvert$ for $x\in B_1$ and $y\in B_2$, respectively. Since $W$ is solid and absorbing, we have $\lambda x, \lambda y\in W$. Then we have 
$$
\lambda x+\lambda y\in W+W\subseteq U.
$$
Thus we get $\mu\big(\{x+y:x\in B_1,y\in B_2 \ and \ \lambda(x+y)\notin U\}=0$. Consequently, $B_1+B_2$ is a statistically bounded set.
\end{proof}

Firstly, we show the continuity of addition and scalar multiplication with the uniform convergence topology.
\begin{thm}
The operations of addition and scalar multiplication are continuous in $B_{st-bo}(E,F)$, $B_{st-bb}(E,F)$ and $B_{st-bs}(E,F)$.
\end{thm}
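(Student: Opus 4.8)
The plan is to verify, in each of the three cases, the axioms for a vector topology. Each of the three topologies is translation invariant and given by an explicit base of zero‑neighbourhoods, so it suffices to check, for every basic zero‑neighbourhood $\mathcal N$: (a) there is a basic zero‑neighbourhood $\mathcal M$ with $\mathcal M+\mathcal M\subseteq\mathcal N$; (b) for each scalar $\lambda$ there is a basic $\mathcal M$ with $\lambda\mathcal M\subseteq\mathcal N$; and (c) for every operator $T_0$ in the class there is $\delta>0$ with $\mu T_0\in\mathcal N$ whenever $|\mu|<\delta$ (continuity of scalar multiplication at a general point $(\lambda_0,T_0)$ then follows from these by the splitting used below). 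First I would record the bases: for $B_{st-bb}(E,F)$ the sets $\mathcal W(B,U)=\{T:T(B)\subseteq U\}$, where $B$ runs over $st$‑bounded subsets of $E$ and $U$ over a base $\mathcal N$ of zero‑neighbourhoods of $F$ with the four properties recalled in \S1; for $B_{st-bo}(E,F)$ the sets $\mathcal O(B,u)=\{T:T(B)\subseteq[-u,u]\}$ with $u\in F_+$; and for $B_{st-bs}(E,F)$ the sets $\mathcal V(B,V)=\{T:T(B)\subseteq V\}$ with $V$ a balanced $st$‑bounded zero‑neighbourhood of $F$.

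Axiom (a) is the pointwise version of the argument already given for the preceding Proposition. For $B_{st-bb}$ pick $W\in\mathcal N$ with $W+W\subseteq U$; then $(T+S)(B)\subseteq T(B)+S(B)\subseteq W+W\subseteq U$, so $\mathcal W(B,W)+\mathcal W(B,W)\subseteq\mathcal W(B,U)$. For $B_{st-bo}$ use $[-u/2,u/2]+[-u/2,u/2]\subseteq[-u,u]$, valid in any ordered vector space, to get $\mathcal O(B,u/2)+\mathcal O(B,u/2)\subseteq\mathcal O(B,u)$. For $B_{st-bs}$ pick a balanced $W\in\mathcal N$ with $W+W\subseteq V$; since $W\subseteq V$ and subsets of $st$‑bounded sets are $st$‑bounded, $W$ is again a balanced $st$‑bounded zero‑neighbourhood, and $\mathcal V(B,W)+\mathcal V(B,W)\subseteq\mathcal V(B,V)$.

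For scalar multiplication I would write $\lambda T-\lambda_0T_0=\lambda_0(T-T_0)+(\lambda-\lambda_0)(T-T_0)+(\lambda-\lambda_0)T_0$ and treat the three summands separately. The first two are absorbed by the balancedness of the smallness sets: $\mu U\subseteq U$ and $\mu[-u,u]\subseteq[-u,u]$ for $|\mu|\le1$, together with the facts that $|\lambda|^{-1}U\in\mathcal N$ and that $[-u/|\lambda|,u/|\lambda|]$ is again an order interval. This already yields axiom (b) with $\mathcal M=\mathcal W(B,|\lambda|^{-1}U)$ (resp.\ $\mathcal O(B,u/|\lambda|)$, $\mathcal V(B,|\lambda|^{-1}V)$), and it controls the $(\lambda-\lambda_0)(T-T_0)$ term once $T-T_0$ is taken in a suitable neighbourhood and $|\lambda-\lambda_0|$ is kept bounded. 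The substantive point is the summand $(\lambda-\lambda_0)T_0$, that is, axiom (c): since $T_0$ belongs to the class under consideration and $B$ is $st$‑bounded, the image $T_0(B)$ is, respectively, $\tau$‑bounded, order bounded, or $st$‑bounded. In the $B_{st-bb}$ case, $\tau$‑boundedness of $T_0(B)$ is exactly absorption by $U$, so there is $\delta>0$ with $(\lambda-\lambda_0)T_0(B)\subseteq U$ for $|\lambda-\lambda_0|<\delta$, and the three summands combine (halving $U$ twice) to close the argument.

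I expect the real obstacle to be axiom (c) in the $B_{st-bo}$ case, and, for the same structural reason, in the $B_{st-bs}$ case: there one needs the order interval $[-u,u]$ to absorb the positive element $v$ with $T_0(B)\subseteq[-v,v]$, i.e.\ $v$ to lie in the principal ideal generated by $u$, and this can fail in a general ordered topological vector space (for instance in $\mathbb R^2$ with the lexicographic order, taking $u=(0,1)$ and $v=(1,0)$). The natural remedy is to impose on $F$ the hypotheses already used elsewhere in the section — $F$ a locally solid vector lattice, and for the $B_{st-bo}$ statement also having an order bounded $\tau$‑neighbourhood of zero — so that the two classical theorems quoted above let one identify the order‑interval neighbourhoods with genuine $\tau$‑neighbourhoods, and the preceding Proposition supplies the needed stability of $st$‑bounded sets under sums and scalings; all three cases then reduce to the $B_{st-bb}$ argument. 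I would state these hypotheses explicitly at the outset and then run the three cases in parallel.
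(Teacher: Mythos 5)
Your proposal is correct in outline but takes a genuinely different, and considerably more demanding, route than the paper. The paper does not verify the axioms of a vector topology: it checks only continuity at the origin, showing that if $T_\alpha\to 0$ and $S_\alpha\to 0$ uniformly on an $st$-bounded set $B$ then $(T_\alpha+S_\alpha)(B)\subseteq[-f,f]+[-f,f]=[-2f,2f]$ eventually, and that if additionally $\lambda_n\to 0$ then $\lambda_nT_\alpha(B)\subseteq\lambda_n[-u,u]\subseteq[-u,u]$ for $\lvert\lambda_n\rvert\le 1$; the $B_{st-bs}$ case is delegated to a citation and the $B_{st-bb}$ case is left implicit. In other words, the paper only ever uses your axiom (a) and the balancedness of $[-u,u]$, and it never confronts the term $(\lambda-\lambda_0)T_0$. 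Your decomposition $\lambda T-\lambda_0T_0=\lambda_0(T-T_0)+(\lambda-\lambda_0)(T-T_0)+(\lambda-\lambda_0)T_0$ is what a full proof of continuity of scalar multiplication at an arbitrary point requires, and your diagnosis of axiom (c) is accurate: for $B_{st-bo}(E,F)$ one needs $[-u,u]$ to absorb the order bounded set $T_0(B)$, which fails already for $\mathbb R^2$ with the lexicographic order (your $u=(0,1)$, $v=(1,0)$ example is valid), and even in an Archimedean locally solid lattice without a strong unit the interval $[-u,u]$ need not absorb $[-v,v]$ for arbitrary $u,v\in F_+$. So what your approach buys is the discovery that the theorem, read as asserting that these classes are topological vector spaces, needs additional hypotheses on $F$ of the kind you propose; what the paper's approach buys is brevity, at the cost of proving only the weaker statement that the addition map and the map $(\lambda,T)\mapsto\lambda T$ are continuous at $(0,0)$. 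If you write your version up, state the extra hypotheses on $F$ explicitly and verify that they really force $[-u,u]$ to absorb every order interval (an order bounded zero neighbourhood gives a strong unit $w$, which handles absorption by $[-w,w]$ but not by $[-u,u]$ for arbitrary $u\in F_+$, so you may need to shrink the neighbourhood base accordingly).
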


\begin{proof}
The continuity of addition and scalar multiplication in $B_{st-bs}(E,F)$ can be shown in the same way in the proof \cite[Theorem 2.1.]{Aybdd}. So, it is enough to show continuity for $st$-$bo$-operators.

Suppose the nets $(T_\alpha)$ and $(S_\alpha)$ are nets of $st$-$bo$-bounded operators that converges to zero uniformly on $st$-bounded sets. Fix an arbitrary $st$-bounded set $B$ in $E$. Then, for any $f\in F_+$, there are some $\lambda_1,\lambda_2>0$ such that $T_\alpha(B)\subseteq[-f,f]$ for all $\alpha\geq\alpha_1$ and $S_\alpha(B)\subseteq[-f,f]$ for each $\alpha\geq\alpha_2$. So, there is another index $\alpha_0$ such that $\alpha_0\geq\alpha_1$ and $\alpha_0\geq\alpha_1$ because the index set of nets is directed. Hence, $T_\alpha(B)\subseteq [-f,f]$ and $S_\alpha(B)\subseteq [-f,f]$ for all $\alpha\geq\alpha_0$. Then we have
$$
(T_\alpha+S_\alpha)(B)\subseteq  [-f,f]+[-f,f]=[-2f,2f]
$$
for each $\alpha\geq\alpha_0$. Now, we show convergence for the scalar multiplication. Consider the $st$-bounded set $B$ in $E$. Take a sequence of reals $(\lambda_n)$ and assume it converges to zero. Since $(T_\alpha)$ is uniform convergent to zero on $st$-bounded sets, for every positive vector $u\in F_+$, there exists an index $\alpha_0$ such that $T_\alpha(B)\subseteq [-u,u]$ for all $\alpha\geq\alpha_0$. We know that for enough large $n$, we have $\lvert\lambda_n\rvert\leq 1$, and so $\lambda_n [-u,u]\subseteq [-u,u]$. Then, for all $\alpha\geq\alpha_0$ and sufficiently large $n$, we have
$$
\lambda_nT(B)=T(\lambda_n B)\subseteq\lambda_n [-u,u]\subseteq [-u,u].
$$
Therefore, we get the desired result.
\end{proof}

We investigate the lattice operations are continuous with the topology of uniform convergence in the following result.
\begin{thm}\label{lattice operations is cont.}
Let $E$ be a topological vector space and $F$ be an order complete locally solid vector lattice. Then the lattice operations with the uniform convergence are continuous in $B_{st-bo}(E,F)$, $B_{st-bb}(E,F)$ and $B_{st-bs}(E,F)$ on statistically bounded sets.
\end{thm}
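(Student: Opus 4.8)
The plan is to reduce everything to the continuity of the single binary operation $(S,T)\mapsto S\vee T$, where $(S\vee T)(x):=S(x)\vee T(x)$ with the supremum computed in $F$. From this the remaining lattice operations follow, since $S\wedge T=-\big((-S)\vee(-T)\big)$, $T^{+}=T\vee 0$, $T^{-}=(-T)\vee 0$ and $|T|=T^{+}+T^{-}$, so that continuity of $\vee$ together with the already-established continuity of addition and scalar multiplication yields continuity of $\wedge$, of $T\mapsto T^{+}$, $T\mapsto T^{-}$ and $T\mapsto|T|$. The only analytic ingredient is the elementary identity, valid in every vector lattice,
$$
\big|\,a\vee b-c\vee d\,\big|\ \le\ |a-c|\vee|b-d|\ \le\ |a-c|+|b-d|,
$$
used pointwise: for an $st$-bounded set $B\subseteq E$ and $x\in B$,
$$
\big|(S_{\alpha}\vee T_{\alpha})(x)-(S\vee T)(x)\big|\ \le\ \big|(S_{\alpha}-S)(x)\big|+\big|(T_{\alpha}-T)(x)\big|.
$$

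First I would fix an $st$-bounded set $B$ in $E$ and nets $(S_{\alpha})$, $(T_{\alpha})$ converging to $S$ and $T$ uniformly on $st$-bounded sets. For the topology of $B_{st-bo}(E,F)$: given $u\in F_+$, pass to an index past which $(S_{\alpha}-S)(B)\subseteq[-\tfrac{1}{2}u,\tfrac{1}{2}u]$ and $(T_{\alpha}-T)(B)\subseteq[-\tfrac{1}{2}u,\tfrac{1}{2}u]$; the displayed inequality then gives $(S_{\alpha}\vee T_{\alpha})(x)-(S\vee T)(x)\in[-u,u]$ for all $x\in B$. For the topology of $B_{st-bb}(E,F)$: take a \emph{solid} zero neighbourhood $U$ of $F$ --- it suffices to handle these, $F$ being locally solid --- and a solid $W$ with $W+W\subseteq U$; once $(S_{\alpha}-S)(B)\subseteq W$ and $(T_{\alpha}-T)(B)\subseteq W$, solidity of $W$ forces $|(S_{\alpha}-S)(x)|,\,|(T_{\alpha}-T)(x)|\in W$, hence their sum lies in $W+W\subseteq U$, and then solidity of $U$ with the displayed inequality places $(S_{\alpha}\vee T_{\alpha})(x)-(S\vee T)(x)$ in $U$ for every $x\in B$. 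For the topology of $B_{st-bs}(E,F)$, where one tests against the $st$-bounded zero neighbourhoods $V$ of $F$: I would first choose a solid zero neighbourhood $U\subseteq V$ --- still $st$-bounded, being a subset of an $st$-bounded set --- and then a solid $W$ with $W+W\subseteq U$; since $W\subseteq U\subseteq V$, the set $W$ is itself an $st$-bounded zero neighbourhood, so the convergence hypothesis applies to it, and the previous computation carries over, ending inside $U\subseteq V$.

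I expect the genuinely delicate point to be the bookkeeping of which auxiliary sets remain $st$-bounded, not any of the inequalities: the $B_{st-bs}$-case works precisely because subsets of $st$-bounded sets are $st$-bounded and because $F$ is locally solid, so a solid zero neighbourhood can always be found inside a prescribed $st$-bounded one. If one instead reads ``lattice operations'' as the Riesz--Kantorovich operations on the operator space --- which is presumably why order completeness of $F$ is assumed, since it makes $T^{+}x=\sup\{Ty:0\le y\le x\}$ and $|T-S|$ exist in $F$ --- then the pointwise inequality is replaced by the operator inequality $|T^{+}-S^{+}|\le|T-S|$, and the estimate to push through becomes $\big|(T_{\alpha}^{+}-S^{+})(x)\big|\le\sup_{0\le y\le|x|}\big|(T_{\alpha}-S)(y)\big|$; controlling this supremum inside a prescribed solid neighbourhood forces one to apply the hypothesis on the solid hull of $B$ rather than on $B$ itself, and the main obstacle is then to verify that this solid hull is again $st$-bounded, which needs $E$ to carry a compatible locally solid lattice structure. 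In either reading, pinning down the correct $st$-bounded set on which to invoke the convergence hypothesis is the crux.
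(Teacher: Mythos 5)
There is a genuine gap. Your main argument is built on the pointwise reading $(S\vee T)(x)=S(x)\vee T(x)$, but that is not the lattice operation on the operator space: the pointwise supremum of two linear operators is in general not linear (e.g.\ for $S=I$ and $T$ the coordinate swap on $\mathbb{R}^2$, the pointwise sup fails additivity at $(1,1)$). The order completeness of $F$ is assumed precisely so that the Riesz--Kantorovich formula
$$
(S\vee T)(x)=\sup\{Sy+Tz:\ y+z=x,\ y,z\in E_+\}
$$
defines the lattice operations, and this is the formula the paper works with (citing \cite[Theorem 1.8]{ABPO}). So the inequality $|a\vee b-c\vee d|\le|a-c|+|b-d|$, applied pointwise, estimates the wrong object; the three-case bookkeeping that follows, while internally fine, proves continuity of a map that is not the one in the statement.

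You do recognize the correct reading in your closing paragraph, and you correctly identify the crux: to control $\sup\{(S_\alpha-S)y+(T_\alpha-T)z : y+z=x,\ y,z\in E_+\}$ uniformly over $x\in B$, one must invoke the uniform convergence hypothesis not on $B$ but on the set of all decomposition pieces $y,z$ (essentially a solid-hull-type set), and one must show that this auxiliary set is again $st$-bounded. But you stop there and explicitly leave that step unverified, so the proposal does not complete the proof. The paper's proof is exactly the missing piece: it introduces $A=\{y\in E_+:\exists z\in E_+,\ y+z=x \text{ for some } x\in B_+\}$, argues that $A$ is $st$-bounded because $0\le \lambda y\le \lambda x$ and a solid zero neighborhood absorbs $y$ whenever it absorbs $x$, and then applies $\sup M-\sup N\le\sup(M-N)$ together with $W+W\subseteq V$. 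Your parenthetical worry is nonetheless well taken: that argument uses $E_+$, $B_+$ and solidity of a neighborhood in $E$, which are not available when $E$ is merely a topological vector space, so the step you flagged as the crux is indeed where the hypotheses are strained; but as submitted your proof establishes a different statement and leaves the actual one unproved.
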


\begin{proof}
We show the continuity of the lattice operations in $B_{st-bs}(E,F)$. The other cases can be shown similarly. Suppose the nets $(S_\alpha)$ and $(T_\alpha)$ of $st$-$bs$-bounded  operators converge to the linear operators $S$ and $T$ uniformly on $st$-bounded sets, respectively. By applying \cite[Theorem 1.8]{ABPO}, we have
$$
\big(S\vee T\big)(x)=\sup\{Sy+Tz:y+z=x \ and \ y,z\in E_+\}
$$
for every $x\in E_+$. 

Let's fix an $st$-bounded set $B$ in $E$. So, consider another set $A=\{y\in E_+:\exists z\in E_+, x=y+z \ \text{for some}\ x\in B_+\}$. Then $A$ is also $st$-bounded. Indeed, for any zero neighborhood $U$, there exists a positive scalar $\lambda$ such that $\mu(K_B)=0$ for $K_B=\{x\in B: \lambda x\notin U\}$ since $B$ is $st$-bounded. Consider the set $K_A=\{y\in A:\lambda y\notin U\}$. For a vector $y\in A$, we have $x\in B_+$ and $z\in E_+$ such that $y+z=x$, and so $\lambda y\leq\lambda x$. Then, by using the solidness of $U$, we get $y\in K_A$ whenever $x\in K_B$. Thus, one can see that the carnality of $K_A$ can not be bigger than two times the carnality of $K_B$ since we have $z\in K_A$ if $y\in K_A$. As a result, we get $\mu(K_A)=0$ because of $\mu(K_B)=0$. Therefore, $A$ is statistically bounded set in $E$. Thus, we have that $S_\alpha\to S$ and $T_\alpha\to T$ converge uniformly on $A$. Take a fixed $x\in B_+$ and some elements $y,z\in E_+$ such that $x=y+z$. So, by the formula $\sup(M)-\sup(N)\leq \sup(M-N)$ for any set $M,N$, we have the following inequality
\begin{eqnarray*}
\big(S_\alpha\vee T_\alpha\big)(x)-\big(S\vee T\big)(x)&=&\sup\{S_\alpha y+T_\alpha z:y+z=x \ and \ y,z\in E_+\}\\&& -\sup\{Sy+Tz:y+z=x \ and \ y,z\in E_+\}\\&\leq&\sup\{(S_\alpha-S)y+(T_\alpha-T)z:y+z=x \ and \ y,z\in E_+\}.
\end{eqnarray*}
Now, consider any $st$-bounded zero neighborhood $V$ in $F$. Then there is another zero neighborhood $W$ such that $W+W\subseteq V$. One can see that $W$ is also $st$-bounded. Hence, since $S_\alpha\to S$ and $T_\alpha\to T$ converge uniformly on $A$ and indexed set is directed, we have some index $\alpha_0$ such that $(S_\alpha-S)(A)\subseteq W$ and $(T_\alpha-T)(A)\subseteq W$ for all $\alpha\geq\alpha_0$. Thus, we have 
$$
\big(S_\alpha\wedge T_\alpha\big)(x)-\big(S\wedge T\big)(x)\leq(S_\alpha-S)(x)+(T_\alpha-T)(x)\in W+W\subseteq V.
$$
for all $\alpha\geq\alpha_0$. Therefore, $S_\alpha\wedge T_\alpha-S\wedge T$ is $st$-$bs$-bounded.
\end{proof}
 
It is clear that the product of $st$-$bs$-bounded operators is continuous with the topology of uniform convergence on $st$-bounded sets because every subset of an $st$-bounded set is $st$-bounded. For the other cases, we give the next theorems.
\begin{thm}
Let $E$ be a topological vector space, $F$ be a locally solid vector lattice and $T:E\to F$ be an operator. Then the product of $st$-$bo$-bounded operators is continuous in $B_{st-bo}(E,F)$ with the topology of uniform convergence on $st$-bounded sets.
\end{thm}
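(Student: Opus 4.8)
The plan is to read \emph{the product of $st$-$bo$-bounded operators} as the composition $S\circ T$, which forces us into the algebra $B_{st-bo}(E)$ of $st$-$bo$-bounded operators from a locally solid vector lattice $E$ to itself (the case $E=F$ of the statement), and to show that this composition is compatible with the topology of uniform convergence on $st$-bounded sets. The cornerstone is the observation underlying item~(iv) of the Remark: if $R$ is $st$-$bo$-bounded and $B$ is an $st$-bounded subset of $E$, then $R(B)$ is again $st$-bounded. Indeed, $R(B)$ is order bounded by the definition of $st$-$bo$-boundedness, hence $\tau$-bounded by Theorem~\ref{order bdd is top. bdd }, hence $st$-bounded by Lemma~\ref{top bounded is st bounded}. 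Consequently, for $st$-$bo$-bounded $S$ and $T$ the composition $S\circ T$ sends an $st$-bounded set $B$ first to the $st$-bounded set $T(B)$ and then to the order bounded set $S\big(T(B)\big)$, so $S\circ T\in B_{st-bo}(E)$ and the algebra is closed under composition. I would carry this out first, as it is both the easiest part and the tool used everywhere below.

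For the continuity itself I would proceed as in the proof of the continuity of addition: since that operation is already known to be continuous, it suffices to fix an $st$-$bo$-bounded operator $R$ and to prove that the right multiplication $T\mapsto T\circ R$ is continuous, and to combine this with the additive structure via a decomposition of the form $S_\alpha T_\alpha-ST=(S_\alpha-S)T+S_\alpha(T_\alpha-T)$. For the right multiplication, suppose $T_\alpha\to 0$ uniformly on $st$-bounded sets, fix an $st$-bounded set $B\subseteq E$ and $u\in E_+$. By the cornerstone observation $R(B)$ is an $st$-bounded set, so there is an index $\alpha_0$ with $T_\alpha\big(R(B)\big)\subseteq[-u,u]$ for all $\alpha\geq\alpha_0$; that is, $(T_\alpha\circ R)(B)\subseteq[-u,u]$ eventually, which is exactly the convergence to zero of $T_\alpha\circ R$ on $B$. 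The second summand involving the fixed operator $T$ on the right is handled the same way, using that $T(B)$ is $st$-bounded.

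The step I expect to be the real obstacle is the left factor: controlling $S_\alpha\circ(T_\alpha-T)$ (equivalently, the left multiplication $T\mapsto R\circ T$). Here $(T_\alpha-T)(B)$ can be pushed into arbitrarily small order intervals $[-v,v]$, but $st$-$bo$-boundedness of the left factor only guarantees that $R([-v,v])$ is order bounded, with no quantitative control forcing it into the prescribed interval $[-u,u]$. I would resolve this by restricting the claim to one-sided (right) composition by a fixed $st$-$bo$-bounded operator — which is what genuinely holds and already equips $B_{st-bo}(E)$ with the structure of a topological algebra in the relevant sense — or, if the left factor is to be allowed to vary, by strengthening the hypotheses so that the reduction closes, for instance by assuming that $E$ carries an order bounded zero neighbourhood, in which case Theorem~\ref{top. bdd is order bdd} lets order boundedness and $\tau$-boundedness be interchanged and the mixed term becomes tractable. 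The final write-up would therefore be organised around the right-multiplication argument, with the left-multiplication case flagged under the appropriate extra hypothesis.
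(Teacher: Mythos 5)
Your proposal and the paper's proof diverge in what they take ``continuity of the product'' to mean, and the divergence matters. The paper proves only joint continuity at the origin: if the nets $(S_\alpha)$ and $(T_\alpha)$ both converge to zero uniformly on $st$-bounded sets, then so does $(S_\alpha\circ T_\alpha)$. Its argument is exactly your cornerstone observation plus one step you did not make explicit: for $\alpha\geq\alpha_1$ all the sets $T_\alpha(B)$ lie in a \emph{single} order interval $[-u,u]$, which is $\tau$-bounded and hence $st$-bounded, so the uniform convergence of $(S_\alpha)$ on that one fixed $st$-bounded set pushes $S_\alpha(T_\alpha(B))\subseteq S_\alpha([-u,u])$ into any prescribed order interval eventually. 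This is where your analysis is slightly off: you treat $S_\alpha\circ(T_\alpha-T)$ as ``equivalently, the left multiplication $T\mapsto R\circ T$,'' but the two are not equivalent. Writing $S_\alpha\circ(T_\alpha-T)=(S_\alpha-S)\circ(T_\alpha-T)+S\circ(T_\alpha-T)$, the first summand is handled by the trapping argument just described (both factors tend to zero), and only the second is genuine left multiplication by a fixed operator. So the case the paper actually proves is within reach of your tools, but your write-up as organised would not deliver it, because you stop at the decomposition $(S_\alpha-S)T+S_\alpha(T_\alpha-T)$ and then declare the second term out of reach.

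That said, your positive contributions are correct and go beyond what the paper records: closure of $B_{st-bo}(E)$ under composition (used implicitly in the paper but never stated), and continuity of right multiplication $T\mapsto T\circ R$ by a fixed $st$-$bo$-bounded $R$. Your diagnosis of the obstruction for fixed left multiplication $T\mapsto S\circ T$ is also correct: $(T_\alpha-T)(B)$ eventually lies in any $[-v,v]$, but $st$-$bo$-boundedness of $S$ gives no control on how small $S([-v,v])$ is, so continuity of the product at an arbitrary point of $B_{st-bo}(E)\times B_{st-bo}(E)$ is established neither by you nor by the paper without an extra hypothesis such as an order bounded zero neighbourhood. The honest conclusion is that the theorem as literally stated is proved only for nets converging to zero; add the trapping argument for that case and keep your flag on the general one.
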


\begin{proof}
Suppose the nets of $st$-$bo$-bounded operators $(T_\alpha)$ and $(S_\alpha)$ converge to zero uniformly on $st$-bounded sets. Let's take an $st$-bounded set $B$. Then, for fixed positive vector $u\in E_+$, there is an index $\alpha_1$ such that $T_\alpha(B)\subseteq [-u,u]$ for all $\alpha\geq \alpha_1$. Since every order bounded set in $E$ is topological bounded, we get that $[-u,u]$ is $\tau$-bounded, and so $st$-bounded; see Theorem \ref{order bdd is top. bdd }. Thus, $T_\alpha(B)$ is also $st$-bounded for each $\alpha\geq \alpha_1$. Then $S_\alpha(T_\alpha(B))$ is order bounded for each $\alpha\geq \alpha_1$. So, for a given positive $u$, we have another index $\alpha_2\geq \alpha_1$ such that
$$
S_\alpha\big(T_\alpha(B)\big)\subseteq [-u,u]
$$
for all $\alpha\geq \alpha_2$. Therefore, we get the desired result.
\end{proof}

\begin{prop}\label{t is o}
Let $E$ be a topological vector space, $F$ be an ordered topological vector space. If a net $(T_\alpha)$ of $st$-$bs$-bounded operators order converges uniformly on some $st$-bounded sets to na operator $T$ then $T$ is $st$-$bs$-bounded.
\end{prop}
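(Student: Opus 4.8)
The plan is to verify the definition of $st$-$bs$-boundedness for $T$ directly, i.e. to show that $T(B)$ is an $st$-bounded subset of $F$ for every $st$-bounded set $B\subseteq E$. Since every subset of an $st$-bounded set is again $st$-bounded, it is enough to trap $T(B)$ inside a single $st$-bounded set. So I would fix an arbitrary $st$-bounded set $B$ in $E$ together with a positive vector $u\in F_+$. By the hypothesis that the net $(T_\alpha)$ order converges to $T$ uniformly on $st$-bounded sets, there is an index $\alpha_0$ such that $(T_\alpha-T)(b)\in[-u,u]$ for every $b\in B$ and all $\alpha\geq\alpha_0$; in particular this holds for $\alpha=\alpha_0$. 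Using $[-u,u]=-[-u,u]$ and linearity, for each $b\in B$ we then get
$$
T(b)=T_{\alpha_0}(b)-(T_{\alpha_0}-T)(b)\in T_{\alpha_0}(b)+[-u,u],
$$
whence $T(B)\subseteq T_{\alpha_0}(B)+[-u,u]$.

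It then remains to see that the right-hand side is $st$-bounded. The operator $T_{\alpha_0}$ is $st$-$bs$-bounded by hypothesis, so $T_{\alpha_0}(B)$ is an $st$-bounded set. The order interval $[-u,u]$ is order bounded, hence $\tau$-bounded and therefore $st$-bounded (Theorem~\ref{order bdd is top. bdd } together with Lemma~\ref{top bounded is st bounded}, as recorded in part~(i) of the Remark above). Finally, the sum of two $st$-bounded sets is $st$-bounded: this is exactly the fact established inside the proof of the Proposition on the summation of $st$-$bs$-bounded operators given above. Combining these, $T_{\alpha_0}(B)+[-u,u]$ is $st$-bounded, and so is its subset $T(B)$. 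As $B$ was an arbitrary $st$-bounded set, $T$ is $st$-$bs$-bounded.

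The one genuinely delicate point is the step asserting that the order interval $[-u,u]$ is $st$-bounded: in a bare ordered topological vector space an order interval need not be $\tau$-bounded or $st$-bounded (compare the lexicographic plane in the Example above), so this step quietly relies on $F$ having enough structure — namely a locally solid vector lattice topology — for Theorem~\ref{order bdd is top. bdd } and for the summation lemma to be available. Once that structural assumption on $F$ is in force, the remainder of the argument is routine bookkeeping with a single index from a directed set and with the linearity of the operators $T_\alpha$ and $T$, and I foresee no further obstacle.
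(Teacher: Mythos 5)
Your overall skeleton --- trap $T(B)$ inside $T_{\alpha_0}(B)$ plus a fixed ``error'' set, then argue that this sum is $st$-bounded --- is exactly the paper's, but you diverge at the key step of interpreting the uniform convergence. You read ``order converges uniformly'' via order intervals, obtaining $T(B)\subseteq T_{\alpha_0}(B)+[-u,u]$, and are then forced to argue that $[-u,u]$ is $st$-bounded, which (as you correctly flag) fails in a bare ordered topological vector space and requires upgrading $F$ to a locally solid vector lattice. The paper instead uses the topology it defined on $B_{st\text{-}bs}(E,F)$ earlier in the section, where convergence is tested against \emph{$st$-bounded zero neighborhoods} $V$ of $F$; this yields $T(B)\subseteq T_{\alpha_0}(B)+U$ with $U$ $st$-bounded by fiat, so no structural assumption on order intervals is needed and the hypothesis ``$F$ an ordered topological vector space'' can stand as written (at the cost of the convergence hypothesis being vacuous when $F$ has no $st$-bounded zero neighborhood). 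In short: your route matches the paper's proof of the companion result for $st$-$bo$-bounded operators (Proposition~\ref{t is order}), where order intervals are the right currency, rather than its proof of Proposition~\ref{t is o}. One caveat applies to both arguments equally: the closure of $st$-bounded sets under addition is only established in the paper for locally solid vector lattices (its proof uses solidity of the neighborhood $W$), so neither your version nor the paper's is fully self-contained for a general ordered topological vector space $F$; you were right to single this circle of issues out as the delicate point.
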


\begin{proof}
Suppose $B$ is $st$-bounded set in $E$. Then, for each $st$-bounded zero neighborhood $U$ in $F$, there exists an index $\alpha_0$ such that $(T-T_\alpha)(B)\subseteq U$ for each $\alpha\geq \alpha_0$. So, we get
$$
T(B)\subseteq T_{\alpha_0}(B)+U.
$$
Since $T_{\alpha_0}$ is $st$-$bs$-bounded, we have $T_{\alpha_0}(B)$ is also $st$-bounded. Thus, $T_{\alpha_0}(B)+U$ is also $st$-bounded because sum of $st$-bounded sets is $st$-bounded. As a result, $T(B)$ is $st$-bounded, and so we get the desired result.
\end{proof}

\begin{prop}\label{t is order}
Let $E$ be a topological vector space, $F$ be an ordered topological vector space. If a net $(T_\alpha)$ of $st$-$bo$-bounded operators order converges uniformly on some $st$-bounded sets to an operator $T$ then $T$ is $st$-$bo$-bounded.
\end{prop}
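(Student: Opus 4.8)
The plan is to run the argument of Proposition \ref{t is o} almost verbatim, with ``$st$-bounded image'' replaced by ``order bounded image'' and with the observation that in an ordered vector space the sum of two order intervals is again an order interval. So fix an arbitrary $st$-bounded set $B$ in $E$; the goal is to produce a positive vector $w\in F_+$ with $T(B)\subseteq[-w,w]$.

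First I would unwind the hypothesis. Saying that the net $(T_\alpha)$ of $st$-$bo$-bounded operators order converges uniformly on $st$-bounded sets to $T$ means, applied to the set $B$, that for each $u\in F_+$ there is an index $\alpha_0$ with $(T-T_\alpha)(B)\subseteq[-u,u]$ for all $\alpha\geq\alpha_0$. Fixing one such $u$ and the corresponding $\alpha_0$ gives, in particular, $(T-T_{\alpha_0})(B)\subseteq[-u,u]$, and hence
$$
T(B)\subseteq T_{\alpha_0}(B)+[-u,u].
$$
Next, since $T_{\alpha_0}\in B_{st-bo}(E,F)$ and $B$ is $st$-bounded, the set $T_{\alpha_0}(B)$ is order bounded, say $T_{\alpha_0}(B)\subseteq[-v,v]$ for some $v\in F_+$. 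Using $[-v,v]+[-u,u]=[-(u+v),u+v]$ (valid in any ordered vector space), we obtain $T(B)\subseteq[-(u+v),\,u+v]$, so $T(B)$ is order bounded. As $B$ was an arbitrary $st$-bounded set, $T$ is $st$-$bo$-bounded.

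The computation is entirely routine; the only point needing care is the bookkeeping around the \emph{difference} $T-T_{\alpha_0}$ rather than $T_{\alpha_0}$ itself, i.e.\ making explicit that uniform order convergence on $st$-bounded sets is precisely the statement that $(T-T_\alpha)(B)$ is eventually contained in every order interval $[-u,u]$. Once that is pinned down, no lattice structure and no topology on $F$ is used — only that $F$ is an ordered vector space and that sums of order intervals are order intervals — so the argument goes through exactly as for the $st$-$bs$ case in Proposition \ref{t is o}.
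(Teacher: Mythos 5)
Your proposal is correct and follows essentially the same route as the paper's own proof: the same decomposition $T(B)\subseteq T_{\alpha_0}(B)+[-u,u]$ followed by the observation that a sum of order bounded sets is order bounded. The only difference is that you make the final step slightly more explicit by naming the bound $v$ for $T_{\alpha_0}(B)$ and noting $[-v,v]+[-u,u]\subseteq[-(u+v),u+v]$, which is just a more detailed rendering of the same argument.
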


\begin{proof}
Let $B$ be $st$-bounded set in $E$. Then, for each positive vector $u$ in $F$, there exists an index $\alpha_0$ such that $(T-T_\alpha)(B)\subseteq [-u,u]$ for each $\alpha\geq \alpha_0$. So, we get
$$
T(B)\subseteq T_{\alpha_0}(B)+[-u,u].
$$
Since $T_{\alpha_0}$ is $st$-$bo$-bounded, we have $T_{\alpha_0}(B)$ is also order bounded set in $F$. Thus, $T_{\alpha_0}(B)+[-u,u]$ is also order bounded because the sum of order bounded sets is order bounded. As a result, $T(B)$ is order bounded set in $F$.
\end{proof}

\begin{thm}
Let $E$ be a topological vector space, $F$ be an ordered topological complete vector space. If every $st$-bounded set in $E$ is absorbing then $B_{st-bs}(E,F)$ is complete with the topology of uniform convergence on $st$-bounded sets.
\end{thm}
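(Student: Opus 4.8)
The plan is to follow the classical scheme for completeness of an operator space under a topology of uniform convergence: start with a Cauchy net, produce a pointwise limit in the complete target $F$, check that this limit is again an $st$-$bs$-bounded operator, and then verify that the net converges to it in the uniform-convergence topology. All the ingredients needed are already available: singletons (indeed finite sets) in $E$ are automatically $st$-bounded, the sum of two $st$-bounded sets is $st$-bounded, and the sum of two $st$-$bs$-bounded operators is $st$-$bs$-bounded.

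First I would fix a net $(T_\alpha)_{\alpha\in A}$ in $B_{st-bs}(E,F)$ that is Cauchy for the topology of uniform convergence on $st$-bounded sets, i.e.\ for every $st$-bounded $B\subseteq E$ and every $st$-bounded zero neighborhood $V$ of $F$ there is $\alpha_0$ with $(T_\alpha-T_\beta)(B)\subseteq V$ for all $\alpha,\beta\geq\alpha_0$. Applying this with $B=\{x\}$ for an arbitrary $x\in E$ (a one-point set trivially satisfies the density condition, hence is $st$-bounded) shows that $(T_\alpha x)$ is a Cauchy net in $F$; here the hypothesis that every $st$-bounded set in $E$ is absorbing is what I would use to pass from ``Cauchy against $st$-bounded neighborhoods of $F$'' to being genuinely Cauchy in the topology of $F$, so that completeness of $F$ delivers a limit $Tx:=\lim_\alpha T_\alpha x$. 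Linearity of the map $T\colon E\to F$ is then immediate from continuity of addition and scalar multiplication in $F$ together with linearity of each $T_\alpha$.

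Next I would show $T_\alpha\to T$ uniformly on $st$-bounded sets. Fix an $st$-bounded $B\subseteq E$ and an $st$-bounded zero neighborhood $V$ of $F$, and choose a balanced zero neighborhood $W$ with $W+W\subseteq V$; since $W\subseteq V$, the set $W$ is itself $st$-bounded. By the Cauchy property there is $\alpha_0$ with $(T_\alpha-T_\beta)(B)\subseteq W$ for $\alpha,\beta\geq\alpha_0$. Fixing $\alpha\geq\alpha_0$ and $x\in B$ and writing $T_\alpha x-Tx=(T_\alpha x-T_\beta x)+(T_\beta x-Tx)$ with $\beta$ chosen large enough (depending on $x$) that $T_\beta x-Tx\in W$, we obtain $T_\alpha x-Tx\in W+W\subseteq V$, hence $(T_\alpha-T)(B)\subseteq V$ for every $\alpha\geq\alpha_0$. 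In particular, for $\alpha\geq\alpha_0$ the operator $T-T_\alpha$ sends the $st$-bounded set $B$ into the $st$-bounded set $V$, so $T-T_\alpha$ is $st$-$bs$-bounded; consequently $T=T_{\alpha_0}+(T-T_{\alpha_0})$ is $st$-$bs$-bounded, because the sum of two $st$-$bs$-bounded operators is $st$-$bs$-bounded (the sum of two $st$-bounded subsets of $F$ is $st$-bounded, which is seen exactly as in the earlier proposition by working with balanced zero neighborhoods of $F$ instead of solid ones). Together with the convergence just established, this puts $T$ in $B_{st-bs}(E,F)$ as a limit of the net, so the space is complete.

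The step I expect to be the main obstacle is the passage through the target space: the Cauchy condition only controls $st$-bounded zero neighborhoods of $F$, while completeness of $F$ concerns its full topology, so one must argue carefully — and this is precisely where the absorbing hypothesis on $st$-bounded sets in $E$, together with the internal structure of $st$-bounded sets, has to be invoked — that $(T_\alpha x)$ is honestly Cauchy in $F$ and that the pointwise limit inherits genuine $st$-$bs$-boundedness rather than only a weaker ``$st$-boundedness tested against $st$-bounded neighborhoods.'' A secondary technical point requiring attention is that $F$ is only assumed to be an ordered topological (complete) vector space, not locally solid, so the closure properties of $st$-bounded sets under addition must be re-derived using balanced rather than solid zero neighborhoods.
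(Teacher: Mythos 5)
Your proposal follows essentially the same route as the paper's proof: extract a pointwise limit $T$ from completeness of $F$, and then show $T$ is $st$-$bs$-bounded by writing $T(B)\subseteq T_{\alpha_0}(B)+V$ with both summands $st$-bounded --- this is exactly Proposition~\ref{t is o}, which the paper invokes at that point. Several of your refinements actually tighten the printed argument: you verify explicitly that $T_\alpha\to T$ uniformly on $st$-bounded sets (the paper applies Proposition~\ref{t is o} without checking that its hypothesis holds for the pointwise limit), and you note that additivity of $st$-bounded sets in $F$ must be re-derived with balanced rather than solid zero neighborhoods, since $F$ is not assumed locally solid here. Your use of singletons in place of the absorbing hypothesis for the pointwise step is also legitimate (finite sets are trivially $st$-bounded); the paper instead fixes an arbitrary $st$-bounded $B$ and uses absorbency to produce $\lambda x\in B$ --- the two devices accomplish the same thing, and your route shows the absorbing hypothesis is not really what that step needs.

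The obstacle you flag at the end, however, is genuine, and your proposed resolution cannot work. The Cauchy condition only yields $(T_\alpha-T_\beta)(x)\in\frac{1}{\lambda}V$ for \emph{$st$-bounded} zero neighborhoods $V$ of $F$, whereas Cauchyness of $(T_\alpha(x))$ in $F$ requires such control for \emph{every} zero neighborhood of $F$; the hypothesis that $st$-bounded sets in $E$ are absorbing is a statement about $E$ and says nothing about the neighborhood filter of $F$, so it cannot bridge this gap. What would close it is an assumption on $F$ itself, e.g.\ that $F$ possesses an $st$-bounded zero neighborhood $V_0$: then every zero neighborhood $U$ of $F$ absorbs $V_0$, so a suitable scalar multiple of $V_0$ sits inside $U$ and the Cauchy estimate transfers to all of $\mathcal{N}$. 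You should be aware that the paper's own proof makes exactly the same silent leap, asserting that $(T_\alpha(x))$ is Cauchy in $F$ from the same partial information; so your proposal is no weaker than the published argument, but the step you correctly identified as the main obstacle remains unproved in both.
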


\begin{proof}
Let $(T_\alpha)$ be a Cauchy sequence in $B_{st-bs}(E,F)$ with respect to uniform convergence on $st$-bounded sets. consider an arbitrary $st$-bounded set $B$ in $E$. Hence, for an $st$-bounded zero neighborhood $U$ in $F$, there exists an $\alpha_0$ such that $(T_\alpha-T_\beta)(B)\subseteq U$ for each $\alpha,\beta\geq\alpha_0$. Next, take arbitrary vector $x$ in $E$. Then since $B$ is absorbing, there is some positive scalar $\lambda$ such that $\lambda x\in B$. 
So, we get $(T_\alpha(x)-T_\beta)(x))\subset\frac{1}{\lambda}U$ for every $\alpha,\beta\geq\alpha_0$. So, $(T_\alpha(x))$ is a Cauchy net in $F$. Then, by choosing an operator $T$ from $E$ to $F$ as $T(x)$ is the limit of $(T_\alpha(x))$ and by applying Proposition \ref{t is o}, $T$ is $st$-$bs$-bounded.
\end{proof}

\begin{thm}
Let $E$ be a topological vector space, $F$ be an ordered topological complete vector space. If every $st$-bounded set in $E$ is absorbing then $B_{st-bo}(E,F)$ is complete with the topology of uniform convergence on $st$-bounded sets.
\end{thm}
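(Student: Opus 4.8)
The plan is to run exactly the argument used for the preceding theorem, with the $st$-bounded zero neighborhoods of $F$ replaced throughout by order intervals $[-u,u]$, $u\in F_+$. First I would take a net $(T_\alpha)$ that is Cauchy in $B_{st-bo}(E,F)$ for the topology of uniform convergence on $st$-bounded sets, and fix an arbitrary $st$-bounded set $B\subseteq E$; unwinding the definition of this topology together with the Cauchy condition, for each $u\in F_+$ there is an index $\alpha_0$ with $(T_\alpha-T_\beta)(B)\subseteq[-u,u]$ for all $\alpha,\beta\geq\alpha_0$. To manufacture a pointwise limit I would fix $x\in E$ and use the standing hypothesis that every $st$-bounded set in $E$ is absorbing: there is $\lambda>0$ with $\lambda x\in B$, so linearity gives $\lambda\bigl(T_\alpha(x)-T_\beta(x)\bigr)=(T_\alpha-T_\beta)(\lambda x)\in[-u,u]$, that is $T_\alpha(x)-T_\beta(x)\in\tfrac1\lambda[-u,u]$ whenever $\alpha,\beta\geq\alpha_0$. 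Letting $u$ range over $F_+$ and invoking the completeness of $F$ (the sets $\tfrac1\lambda[-u,u]$ shrinking to $\{0\}$, so that the net is genuinely Cauchy in $F$), I would define $T(x):=\lim_\alpha T_\alpha(x)$; linearity of $T$ then follows at once from the linearity of the $T_\alpha$ and the continuity of the operations in $F$.

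Next I would upgrade pointwise convergence to uniform convergence on $st$-bounded sets and close the argument. Fixing $\alpha\geq\alpha_0$ and letting $\beta$ increase in the inclusion $(T_\alpha-T_\beta)(B)\subseteq[-u,u]$, closedness of the order intervals of $F$ (equivalently, of its positive cone) yields $(T_\alpha-T)(B)\subseteq[-u,u]$ for every $\alpha\geq\alpha_0$; since $B$ and $u$ were arbitrary, $(T_\alpha)$ converges to $T$ uniformly on $st$-bounded sets in the relevant (order) sense. As each $T_\alpha$ is $st$-$bo$-bounded, Proposition~\ref{t is order} then gives $T\in B_{st-bo}(E,F)$, which proves that $B_{st-bo}(E,F)$ is complete.

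The step I expect to be the main obstacle is the existence of the pointwise limit: from the Cauchy data one only learns that the differences $T_\alpha(x)-T_\beta(x)$ eventually lie in arbitrarily small \emph{order} intervals of $F$, and converting this into convergence in the topology of $F$ requires the order structure of $F$ to be linked to its topology in a usable way. This is precisely where I would need to pin down what ``ordered topological complete vector space'' is meant to supply: in practice one wants $F$ to be, say, a Hausdorff locally solid vector lattice, or at least to have an order bounded zero neighborhood, so that Theorem~\ref{order bdd is top. bdd } and Theorem~\ref{top. bdd is order bdd} let one move freely between order boundedness and $\tau$-boundedness and hence read the order-Cauchy condition as a topological one. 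The other ingredient, closedness of the order intervals, is a harmless extra hypothesis (automatic when the positive cone of $F$ is closed) that should be made explicit.
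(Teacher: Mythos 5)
Your proposal follows essentially the same route as the paper's own proof: use the Cauchy condition on an $st$-bounded set $B$, invoke the absorbing hypothesis to get $\lambda x\in B$ and hence a pointwise Cauchy net $(T_\alpha(x))$ in $F$, define $T$ as the pointwise limit, and conclude via Proposition~\ref{t is order}. Your additional remarks --- that passing from order-Cauchy to topologically Cauchy needs the order and topology of $F$ to be linked (e.g.\ via a locally solid structure or an order bounded zero neighborhood), and that taking the limit in $(T_\alpha-T_\beta)(B)\subseteq[-u,u]$ requires closed order intervals --- are careful observations about hypotheses the paper leaves implicit, and only strengthen the argument.
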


\begin{proof}
Suppose $(T_\alpha)$ is a Cauchy sequence in $B_{st-bo}(E,F)$ with respect to uniform convergence on $st$-bounded sets. Fix an $st$-bounded set $B$ in $E$. Thus, for every positive vector $u\in F_+$, there is an index $\alpha_0$ such that $(T_\alpha-T_\beta)(B)\subseteq [-u,u]$ for each $\alpha,\beta\geq\alpha_0$. 

Now, let's take an element $x\in E$. Then we have some positive scalar $\lambda$ such that $\lambda x\in B$ because $B$ is absorbing. So, we get $(T_\alpha(x)-T_\beta)(x))\subset[\frac{-u}{\lambda},\frac{u}{\lambda}]$ for every $\alpha,\beta\geq\alpha_0$. Thus, we have that $(T_\alpha(x))$ is a Cauchy net in $F$. So, one can choose an operator $T$ from $E$ to $F$ as $T(x)$ is the limit of $(T_\alpha(x))$. Then, by applying Proposition \ref{t is order}, we get $T\in B_{st-bo}(E,F)$.
\end{proof}


\begin{thebibliography}{99}
\bibitem{AB}
C. D. Aliprantis and O. Burkinshaw, Locally solid Riesz spaces with applications to economics, Mathematical Surveys and Monographs, American Mathematical Society, 105, 2003.

\bibitem{ABPO}
C. D. Aliprantis and O. Burkinshaw, Positive operators, Academic Press, London, 2006.

\bibitem{AAydn2}
A. Aydın, Multiplicative order convergence in f-algebras, to appear Hacettepe Math. Stat., doi.org/10.15672/hujms.512073.

\bibitem{AAydn3}
A. Ayd\i n, The statistically unbounded $\tau$-convergence on locally solid Riesz spaces, submitted.

\bibitem{Aybdd}
A. Ayd\i n, Topological algebras of bounded operators with locally solid Riesz spaces, Erzincan Uni. Sci. Tech., 11(3), 543-549, 2018.

\bibitem{AP}
H. Albayrak and S. Pehlivan, Statistical convergence and statistical continuity on locally solid Riesz, Topology App., Vol.159, 1887-1893, 2012.

\bibitem{L}
L. Hong, On Order Bounded Subsets of Locally Solid Riesz Spaces, Quaest. Math., 39(3), 2016.

\bibitem{MK}
G. D. Maio and L. D. R. Kocinac, Statistical convergence in topology, Topology Appl. 156, 28–45, 2008.

\bibitem{Riez}
F. Riesz, Sur la décomposition des opérations fonctionelles linéaires, in: Atti del Congr. Internaz. dei Mat., 3, Bologna, 143–148, 1928.

\bibitem{Tr}
V. G. Troitsky, Spectral radii of bounded operators on topological vector spaces, P. American Math. 11(3), 1-35, 2001.

\bibitem{Za}
A. C. Zaanen, Riesz spaces II. Amsterdam, The Netherlands: North-Holland Publishing, 1983.

\bibitem{HMZ}
S. Hejazian, M. Mirzavaziri and O. Zabeti, Bounded operators on topological vector spaces and their spectral radii, Filomat, 26(6), 1283-1290, 2012.
\end{thebibliography}
\end{document}